\numberwithin{equation}{section}
\newtheorem{thm}{Theorem}[section]
\newtheorem{lemma}[thm]{Lemma}
\newtheorem{prop}[thm]{Proposition}
\theoremstyle{definition}
\newtheorem{defi}{Definition}[section]
\newcommand{\e}{\epsilon}
\newcommand{\R}{\mathbb{R}}
\newcommand{\Z}{\mathbb{Z}}
\newcommand{\p}{\mathbb{P}}
\begin{document}

\title{Quenched Invariance Principle for the Random Walk on the Penrose
Tiling \footnote{2010 \emph{Mathematics Subject Classification.} Primary: 60F17;
Secondary: 60K37.}}
\author{Zs. Bartha \footnote{Zsolt Bartha, Budapest University of Technology
and Economics, Magyar tudosok korutja 2, \ 1117
Budapest HUNGARY, barthazs@math.bme.hu}
\footnote{Supported in part by OTKA (Hungarian
Scientific Research Fund) Grant K109684.}
\and A. Telcs \footnote{Andras Telcs, Prof. of Quantitative Methods, Department of Quantitative
Methods, Faculty of Economics, Univ. Pannonia, 8200 Veszprém
HUNGARY \& Assoc. Prof. Department of Computer Science and Information
Theory, Budapest University of Technology and Economics, Magyar tudosok korutja 2, \ 1117
Budapest HUNGARY, telcs.szit.bme@gmail.com}}
\date{July 4, 2014}
\maketitle

\begin{abstract}
We consider the simple random walk on the graph corresponding to a Penrose
tiling. We prove that the path distribution of the walk converges weakly to
that of a non-degenerate Brownian motion for almost every Penrose tiling
with respect to the appropriate invariant measure on the set of tilings. Our
tool for this is the corrector method.

\vspace{0.3 cm}
\emph{Keywords: random walk, random media, Penrose tiling,
quenched invariance principle, corrector method}
\end{abstract}

\section{Introduction}

The Penrose tiling is the most famous aperiodic tiling of the plane, that
is, a covering with given polygons (two kinds of rhombuses in this case)
without overlaps or gaps which no translation of the plane maps to itself.
We already have a good understanding of the structure of the Penrose
tilings, and several ways are known to construct them. Because of these
properties, they are studied in the field of diffusion in quasicrystalline
environment. In \cite{sz} Domokos Szász introduced that in the Lorentz gas
model it would be worth studying the case (as an aperiodic one) when the
billiard obstacles are placed corresponding to the Penrose tiling. He
conjectured that the scaling limit of the diffusion is a Brownian motion. A
closely related model to this is the simple random walk on the rhombuses of
the Penrose tiling, precisely, which runs on the graph with the centers of
the rhombuses as vertices and the edges spanned by centers of neighboring
rhombuses; we call this a Penrose graph. Considering the set of Penrose
tilings of the plane with the appropriate shift invariant measure,
we actually examine a random walk in random environment.
The question of our interest is that whether the invariance principle holds,
that is, the scaling limit of the process is a Brownian motion. András Telcs
proved this in \cite{telcs} for the annealed case, that is, for the case
where we take the average of the walk with respect to the mentioned
invariant measure. The result of this paper is that the invariance principle
is also true for the quenched case, that is, for almost every concrete
tiling. More precisely, we firstly consider a fixed Penrose-tiling $\omega_0$
on the plane (we also refer to this as a configuration)
where the origin is the center of a tile, and confine ourselves to the set
$\Omega$ of tilings which can be obtained from $\omega_0$ by shifting the
tiling in such a way that the center of a tile moves to the origin.
On the set of tilings in $\Omega$ we can define a shift-invariant
probability measure with respect to which a step along a fixed principal
direction is ergodic \cite{r}. Denote this measure by $\mathbb{P}$ and the
corresponding expectation by $\mathbb{E}$. Using the notation $\mathcal{W}_T$
for the usual Borel-$\sigma$-algebra on $C[0,T]$, the space of continuous
functions on $[0,T]$, our result is the following:

\begin{thm}\label{main}
Let $(X_n)_{n\ge 0}$ be the simple symmetric random walk on the Penrose graph
starting from the origin and let 
\begin{equation*}
\tilde B_n(t)=\frac{1}{\sqrt{n}}(X_{\lfloor tn\rfloor}+(tn-\lfloor
tn\rfloor)(X_{\lfloor tn\rfloor+1}-X_{\lfloor tn\rfloor}))\quad t\ge 0.
\end{equation*}
Then for all $T>0$ and for $\mathbb{P}$-almost every $\omega\in\Omega$, the
law of $(\tilde B_n(t):0\le t\le T)$ on $(C[0,T],\mathcal{W}_T)$ converges
weakly to the law of a non-degenerate Brownian motion $(B_t:0\le t\le T)$.
\end{thm}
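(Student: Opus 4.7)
The plan is to apply the corrector method in the reversible form going back to Kipnis--Varadhan and now standard for random walks in random environments (as in Sidoravicius--Sznitman or Berger--Biskup). First I set up the \emph{environment viewed from the particle}, the Markov chain $\omega_n := \tau_{X_n}\omega$ on $\Omega$, where $\tau_x$ shifts the tiling so that the tile originally centered at $x$ moves to the origin. Since the walk is simple symmetric, the measure $\mathbb{P}$ (possibly reweighted by the degree of the origin tile) is reversible and stationary for this chain; ergodicity of the chain should follow from the ergodicity of the principal-direction shift stated in the introduction, together with an irreducibility argument showing that the walk visits tiles of every admissible orientation.

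Next I would construct a corrector $\chi$ as a shift-covariant cocycle $\chi(\omega,x) = \chi(\tau_x\omega) - \chi(\omega)\in\mathbb{R}^2$ such that
\begin{equation*}
M_n := X_n + \chi(\omega, X_n)
\end{equation*}
is a square-integrable $\mathbb{R}^2$-valued martingale under the quenched law. Existence of such a $\chi$ comes from the orthogonal projection in $L^2(\mathbb{P})$ of the ``identity cocycle'' $x\mapsto x$ onto the closed subspace of square-integrable shift-covariant martingale increments; reversibility of the environment chain and the boundedness of degrees and of tile diameters in a Penrose tiling guarantee the required $L^2$-integrability of the local displacement.

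The main technical obstacle is to prove that the corrector is sublinear, that is, $\mathbb{P}$-a.s.
\begin{equation*}
\lim_{n\to\infty} \frac{1}{n}\max_{|x|\le n}|\chi(\omega,x)| = 0.
\end{equation*}
Along the path of the walk the weaker statement $\chi(\omega,X_n)/\sqrt n\to 0$ follows from the $L^2$ construction combined with the ergodic theorem for the environment chain. Upgrading this to uniform sublinearity on Euclidean balls typically requires Gaussian-type heat-kernel upper bounds for the walk, which in turn rest on uniform volume doubling and a Poincar\'e inequality on the Penrose graph; these should be available thanks to the bounded geometry (bounded tile diameters and degrees), the finite local complexity, and the repetitivity (local isomorphism property) of Penrose tilings, which guarantees that every finite patch reappears with a uniform positive density.

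Once sublinearity is established, the martingale CLT applied to $M_n$, whose increments are stationary and ergodic under the environment chain, yields weak convergence of $M_n/\sqrt n$ to a Brownian motion with a deterministic covariance $\Sigma$; sublinearity transfers this to $X_n/\sqrt n$, and tightness of the interpolated process $\tilde B_n$ in $C[0,T]$ follows from standard Doob-type bounds on the martingale. Non-degeneracy of $\Sigma$ can then be extracted from the annealed invariance principle of \cite{telcs}: the annealed covariance must equal the $\mathbb{P}$-essentially constant quenched $\Sigma$ by the ergodic theorem applied to $\langle M\rangle_n/n$, and it is known there to be non-degenerate. The decisive step is the uniform sublinearity of $\chi$, because the long-range quasi-periodic correlations of the Penrose tiling preclude the i.i.d.\ arguments used for Bernoulli percolation and make it essential to exploit the aperiodic order and repetitivity of the tiling.
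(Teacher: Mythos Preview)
Your overall strategy is sound and is a recognizable variant of the corrector method, but it diverges from the paper at the central step---how to prove sublinearity of the corrector---and in a couple of secondary points.

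The paper constructs $\chi$ via the Kipnis--Varadhan resolvent equation $(1+\epsilon-Q)\psi_\epsilon=V$ and passes to the limit $\epsilon\searrow 0$; your projection formulation is an equivalent packaging, so no issue there. The real contrast is in sublinearity. You propose the analytic route: sublinearity along the path plus Gaussian heat-kernel upper bounds (via volume doubling and a Poincar\'e inequality) to upgrade to uniform sublinearity on balls, in the spirit of Sidoravicius--Sznitman or Mathieu--Piatnitski. The paper instead exploits a feature specific to Penrose tilings, the pentagrid decomposition into \emph{ribbons}. It first proves $\chi(z_k,\omega)/k\to 0$ along each ribbon through the origin by Birkhoff's theorem (the shift along a ribbon is ergodic by Robinson's result), then introduces ``$K,\epsilon$-good'' points where the ribbon estimate holds uniformly, shows by a second ergodicity argument that such points have positive density on every ribbon, and finally uses the resulting grid of good ribbons together with harmonicity of $x\mapsto x+\chi(x,\omega)$ and the maximum principle to control $\chi$ inside each cell of the grid. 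This is more elementary---no heat-kernel machinery---and tied to the de~Bruijn structure, whereas your route is more generic (it would work for any bounded-geometry ergodic environment) at the cost of importing heavier analytic input.

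Two smaller remarks. First, your claim that ``$\chi(\omega,X_n)/\sqrt{n}\to 0$ follows from the $L^2$ construction combined with the ergodic theorem'' overstates what comes for free: the $L^2$/ergodic input yields only $o(n)$ growth (annealed, or along ergodic directions), not $o(\sqrt{n})$ quenched along the walk. The latter is exactly what uniform sublinearity plus the martingale CLT for $M_n$ delivers, so the logical order is the reverse of what you wrote---uniform sublinearity is the input, not the upgrade. Second, the paper proves non-degeneracy of $D$ directly (if $v^TDv=0$ the corrected graph would lie on a line, which sublinearity of $\chi$ rules out) rather than by appealing to the annealed result; your reduction is valid but less self-contained. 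Also, no degree reweighting is needed: every Penrose tile has exactly four neighbours.
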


Our proof applies the corrector method, a common way of proving
invariance principles, firstly used by Kipnis and Varadhan \cite{kv}.
It was also used by Berger and Biskup in \cite{bb} and by Biskup and
Prescott in \cite{bp} to prove the invariance principle on the supercritical
percolation cluster of $\mathbb{Z}^2$. The corrector method means the
modification of the graph with the shift of the vertices such that the walk
on the new graph is a martingale; for this case strong theorems are
applicable. The main tasks are therefore the proof of the existence of the
corrector, and the estimation of the bias caused by that with respect to the
scaling limit. For the former, spectral theoretical considerations are
needed, while the latter is based on the already proven ergodicity of the
walk.

Our paper is organized as follows: in Section 2, we give the definition
of the Penrose tiling, and a construction that our proof will rely on.
This will show us the structure of the set of all Penrose tilings and
the shift-invariant measure on it. In Section 3, we prove the existence
of the corrector and its sublinearity. We conclude from this
the invariance principle for the corrected Penrose graph in Section 4,
and for the original Penrose graph in Section 5.
Throughout the paper we follow some of the main steps of \cite{bb}.

\section{Basics of the construction of Penrose tilings and the invariant measure}

Penrose tilings are builded from two kinds of rhombuses with certain
matchings rules (see for example Penrose's original paper \cite{penrose}).
Figure 1 shows these rhombuses. The "thin" rhombus has angles of $\pi/5$
and $4\pi/5$, while the "thick" rhombus has angles of $2\pi/5$ and $3\pi/5$.
A Penrose tiling is any tiling of the plane without gaps and overlaps
using only these two kinds of rhombuses that obeys the matching rules
also shown in Figure 1: two rhombuses can be placed next to each other
only in a way such that their common side has the same type of arrow
pointing in the same direction. Penrose showed that any of these
tilings is aperiodic, i.e., there is no translation of the plane that
maps a tiling to itself.

\begin{figure}[h]
\begin{center}
\caption{The two kinds of rhombuses with the matching rules}
\includegraphics[width=14 cm]{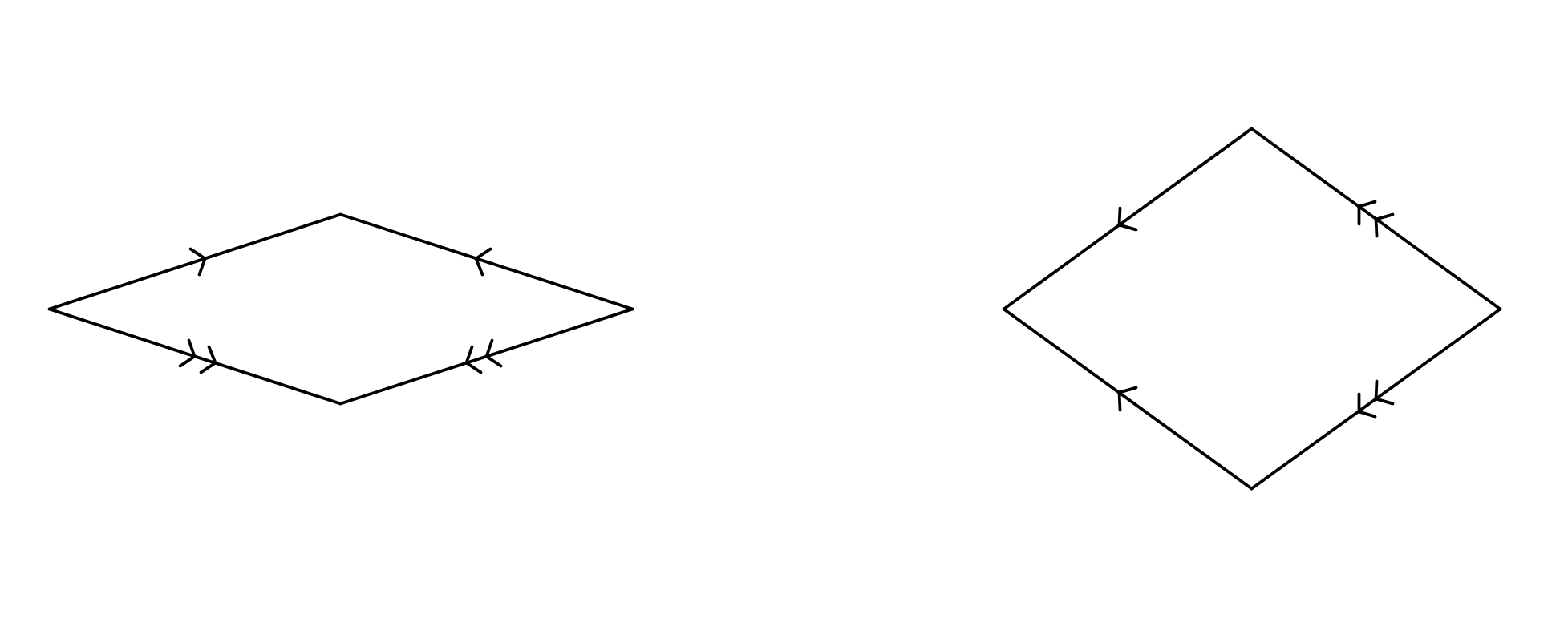}
\end{center}
\end{figure}

The results of de Bruijn \cite{bruijn} show a property of the Penrose tiling
(or from another point of view, an equivalent definition) that will play a
crucial role in our proof. His idea of relating Penrose tilings
to pentagrids was the basis of the work of Kunz \cite{kunz} who gave the invariant,
ergodic measure that our proof relies on.

To start, we define the vectors
\begin{align*}
{\bf e}_k&=\left(\cos\frac{2k\pi}{5},\sin\frac{2k\pi}{5}\right),\\
{\bf e}_{k\bot}&=\left(\cos\left(\frac{\pi}{2}+\frac{2k\pi}{5}\right),\sin\left(\frac{\pi}{2}+\frac{2k\pi}{5}\right)\right),
\end{align*}
for $k=0,1,2,3,4$, and the grids
\[G_k=\{{\bf x}\in\R^2:{\bf x}\cdot{\bf e}_{k\bot}-\gamma_k\in\Z\},\]
where $\gamma_k\in[0,1)$ and
\[\sum_{k=0}^4\gamma_k=0.\]
Thus, the grid $G_k$ is a set of parallel lines oriented along ${\bf e}_k$.
We call the union of $G_0,\ldots,G_4$ a pentagrid.
Denote by $\mathcal{S}$ the set of intersections of any two lines of the
pentagrid. Given such a pentagrid, the corresponding Penrose
tiling is constructed by associating with each $x\in\mathcal{S}$ a tile
containing $x$ with edges perpendicular to the lines intersecting at $x$.
This is a thick rhombus if $x\in G_i\cap G_j$ with $|j-i|\in\{1,4\}$ and a thin
rhombus if $|j-i|\in\{2,3\}$.
(This construction works when there is no point of the plane that belongs
to at least three lines of the pentagrid, otherwise the tiling can be constructed
as a limit in a particular sense.) By de Bruijn's proof the matching rules are
satisfied in this procedure, and we obtain a Penrose tiling. Furthermore, every Penrose
tiling can be obtained from an appropriate pentagrid. We call the set of tiles
(or the corresponding set of graph vertices) associated with the
intersections on a given line of the pentagrid a ribbon. In other words,
this is a path of consecutively neighboring tiles where the borders between
neighboring tiles are all parallel. When we fix a center of a tile as the
origin of the plane, we will refer to the directions of the lines of the
pentagrid through this origin as principal directions.

From this construction, it follows that the Penrose tilings of the plane,
up to translations, can be parametrized as $(i,j;\gamma_n,\gamma_m)$, where
$i,j\in\{0,1,\ldots,5\}, j-i\equiv 1 \text{ or } 2\ \mod(5)$ and $\gamma_n,\gamma_m\in[0,1)$.
This is interpreted as the following: the origin is at a point of $G_i\cap G_j$,
$\gamma_i$ and $\gamma_j$ are chosen to be $0$,
and with $\{l,m,n\}=\{1,\ldots,5\}\setminus\{i,j\}$
we choose $m$ and $n$ so that $m-n\equiv 1 \text{ or } 2\ \mod(5)$ and
the angles $({\bf e}_n,{\bf e}_m)$ and $({\bf e}_i,{\bf e}_j)$ intersect
each other. Moreover, we set $\gamma_l=1-\gamma_n-\gamma_m$. With all of this,
we uniquely determined a pentagrid, and thus a Penrose lattice. This parametrization
shows that $\Omega^*$, the set of all Penrose tilings of the plane is isomorphic to
the union of ten tori.

A (random) walk on a Penrose tiling can be looked at as a sequence of elements of $\Omega^*$:
instead of a moving particle in a fixed environment, we can consider the moving
environment as seen from the particle (the place of which is always the origin by definition).
In this way, a step from one tile to a neighboring one corresponds to a shift of the
Penrose tiling. There are four types of these shifts ($T_1,\ldots,T_4$) corresponding
to the four sides of a tile. (For the way to define $T_1,\ldots,T_4$ unambiguously on the whole
$\Omega^*$, see \cite{kunz}.) Kunz showed that under these shifts the following measure $\mu$ on $\Omega^*$
is invariant and ergodic (i.e., a subset of $\Omega^*$ that is invariant under each $T_i$ has measure
$0$ or $1$). Take the Lebesgue measure on the tori mentioned above (parametrized by the pairs $(i,j)$)
for which $j-i\equiv 2\ \mod(5)$, and $\tau$ times the Lebesgue measure on the tori for which
$j-i\equiv 2\ \mod(5)$, where $\tau=(1+\sqrt{5})/2$. Then normalize this measure so as to
be a probability measure on $\Omega^*$ to obtain $\mu$.

\section{The corrected Penrose graph}

As we mentioned, we would like to find a vector called corrector for each
vertex of the graph for which if we shift the vertices by
the corresponding correctors, leaving the system of the edges unchanged, the
simple symmetric random walk on the new (corrected) graph is a martingale.
In the following we construct this corrector and we prove its sublinear
property which implies that its effect vanishes in the
scaling limit. This allows us to derive the invariance principle for the
original Penrose graph from that of the corrected graph.

\subsection{The construction of the corrector}

First of all, we introduce a few more notations that we use in this paper.
Let $S$ be the set of planar vectors that occur
in configuration $\omega_0$ (or in any $\omega\in\Omega$) as a vector from
the center of a tile to the
center of one of it's neighboring tiles (there are finitely many such
vectors). If in a configuration $\omega\in\Omega$ $x\in\mathbb{R}^2$ is
the center of a tile, then denote by $\tau_x\omega$ the configuration
that we get by shifting $\omega$ so that the
point $x$ moves to the origin. For a given $\omega\in\Omega$ let $%
S_{\omega}\subseteq S$ be the set of the centers of the tiles neighboring
the tile of the origin. Denote by $H\subseteq\mathbb{R}^2$ the set of the
points on the plane that occur in some configuration from $\Omega$ as the
center of a tile. Furthermore, for a given $\omega\in\Omega$ let $%
H_{\omega}\subseteq H$ be the set of points of the plane that occur in
configuration $\omega$ as the center of a tile.

The shift-invariant probability measure $\p$ on $\Omega$, already mentioned in Section 1,
is the restriction of the measure $\mu$, introduced in Section 2,
to $\Omega$, normalized. With a slight abuse of notation
we denote by $L^2=L^2(\Omega)$ the Hilbert space of scalar or vector valued
functions defined on $\Omega$ which are square integrable with respect to $%
\mathbb{P}$, the scalar product being $<f,g>=\mathbb{E}(f\cdot g)$.

For a given $\omega\in\Omega$ let $(X_n)_{n\ge 0}$ be the simple symmetric
random walk on the centers of consecutively neighboring tiles starting from
the origin and let $P_{\omega}$ be the correspoding probability measure. So, 
$P_{\omega}(X_0=0)=1$ and for any $e\in \mathbb{R}^2$ 
\begin{equation}  \label{atmenet}
P_{\omega}(X_{n+1}=x+e|X_n=x)=\frac{1}{4}\mathbbm{1}\{e\in
S_{\omega}\}\circ\tau_x
\end{equation}
Let $Q$ be the Markov operator of the random walk on $\Omega$, so for any
function $f\in L^2(\Omega)$, $Qf\in L^2(\Omega)$ is the following: 
\begin{equation*}
(Qf)(\omega)=\frac{1}{4}\sum_{e\in S_{\omega}}f(\tau_e\omega).
\end{equation*}
Since $Q$ is the Markov operator of a reversible random walk, $%
||Q||_{L^2}\le 1$ and $Q$ is self-adjoint. Denote by $V:\Omega\rightarrow%
\mathbb{R}^2$ the drift at the origin: 
\begin{equation*}
V(\omega)=\frac{1}{4}\sum_{e\in S_{\omega}}e.
\end{equation*}
$V$ is bounded, therefore $V\in L^2(\Omega)$. These imply that the equation 
\begin{equation*}
(1+\epsilon-Q)\psi_\epsilon=V
\end{equation*}
can be solved uniquely for any $\epsilon>0$ with respect to $%
\psi_\epsilon\in L^2$.

\begin{thm}
\label{korrektor} There exists a function $\chi:\Omega\times H\rightarrow%
\mathbb{R}^2$ with which for any $x\in H$: 
\begin{equation*}
\lim_{\epsilon\searrow 0}\mathbbm{1}\{x\in
H_{\omega}\}(\psi_{\epsilon}\circ\tau_x-\psi_{\epsilon})=\chi(x,\cdot)\text{
in $L^2$.}
\end{equation*}
For this function the followings apply:

(1) For $\mathbb{P}$-almost every $\omega\in\Omega$: 
\begin{equation*}
\chi(x,\omega)-\chi(y,\omega)=\chi(x-y,\tau_y\omega)
\end{equation*}
for any $x,y\in H_{\omega}$.

(2) For $\mathbb{P}$-almost every $\omega\in\Omega$ the $x\mapsto\chi(x,%
\omega)+x$ function is harmonic with respect to the transition probabilities
given by \eqref{atmenet}.

(3) There exists a constant $C<\infty$ for which 
\begin{equation*}
||(\chi(x+e,\omega)-\chi(x,\omega))\mathbbm{1}\{x\in H_{\omega}\}\mathbbm{1}%
\{e\in S_{\omega}\}\circ\tau_x||_2<C
\end{equation*}
applies for any $x\in H$ and for any $e\in S$.
\end{thm}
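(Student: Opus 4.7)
I would follow the Kipnis--Varadhan corrector construction as adapted to random graphs by Berger and Biskup \cite{bb}. The framework relies on the Dirichlet form $\mathcal{E}(f,f):=\langle(I-Q)f,f\rangle$. A direct calculation, using only the $\p$-preservation of the shifts $\tau_{e}$, gives
\[
\mathbb{E}\!\left[\frac{1}{4}\!\sum_{e\in S_{\omega}}(f\circ\tau_{e}-f)^{2}\right]=2\,\mathcal{E}(f,f),
\]
so that for each fixed $e\in S$,
\[
\big\|\mathbbm{1}\{e\in S_{\omega}\}(f\circ\tau_{e}-f)\big\|_{2}^{2}\le 8\,\mathcal{E}(f,f).
\]
The strategy has four steps: (i) show $\sup_{\e}\mathcal{E}(\psi_{\e},\psi_{\e})<\infty$; (ii) use this plus the spectral theorem to make $\mathbbm{1}\{e\in S_{\omega}\}(\psi_{\e}\circ\tau_{e}-\psi_{\e})$ Cauchy in $L^{2}$ for every $e\in S$; (iii) extend to general $x\in H$ by telescoping along paths of neighbouring tile-centres in $\omega$; (iv) verify the three listed properties.

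\textbf{Uniform Dirichlet bound.} Pairing the defining equation with $\psi_{\e}$ yields $\mathbb{E}[V\cdot\psi_{\e}]=\e\|\psi_{\e}\|_{2}^{2}+\mathcal{E}(\psi_{\e},\psi_{\e})\ge\mathcal{E}(\psi_{\e},\psi_{\e})$. For the other direction, I exploit the gradient form $V(\omega)=\frac{1}{4}\sum_{e\in S_{\omega}}e$: antisymmetrize in $e\mapsto -e$ using $S=-S$ and the reversibility identity
\[
\mathbb{E}\!\big[\mathbbm{1}\{-e\in S_{\omega}\}g(\omega)\big]=\mathbb{E}\!\big[\mathbbm{1}\{e\in S_{\omega}\}g(\tau_{e}\omega)\big],
\]
to obtain
\[
8\,\mathbb{E}[V\cdot\psi_{\e}]=\sum_{e\in S}\mathbb{E}\!\big[\mathbbm{1}\{e\in S_{\omega}\}\,e\cdot(\psi_{\e}-\psi_{\e}\circ\tau_{e})\big].
\]
Cauchy--Schwarz on each summand together with the neighbour bound above gives $\mathbb{E}[V\cdot\psi_{\e}]\le C_{S}\sqrt{\mathcal{E}(\psi_{\e},\psi_{\e})}$ with $C_{S}$ depending only on the finite set $S$, and hence $\mathcal{E}(\psi_{\e},\psi_{\e})\le C_{S}^{2}$ uniformly in $\e$.

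\textbf{Existence of $\chi$ and verification of (1)--(3).} Writing $d\mu_{V}$ for the spectral measure of $V$ with respect to the self-adjoint contraction $Q$, the uniform Dirichlet bound gives $\int(1-\lambda)^{-1}d\mu_{V}(\lambda)<\infty$ (by monotone convergence), and then
\[
\mathcal{E}(\psi_{\e}-\psi_{\e'},\psi_{\e}-\psi_{\e'})=\int_{-1}^{1}(1-\lambda)\left(\frac{1}{1+\e-\lambda}-\frac{1}{1+\e'-\lambda}\right)^{\!2}\!\!d\mu_{V}(\lambda)\to 0
\]
as $\e,\e'\searrow 0$ by dominated convergence. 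Combined with the neighbour inequality this yields $\chi(e,\cdot)$ for every $e\in S$ as an $L^{2}$-limit and simultaneously property~(3) with $C$ proportional to $\sup_{\e}\mathcal{E}(\psi_{\e},\psi_{\e})^{1/2}$. For general $x\in H$, on the event $\{x\in H_{\omega}\}$ pick a path $0=y_{0},\ldots,y_{m}=x$ of neighbouring tile-centres (with $m$ bounded uniformly in $\omega$ by a multiple of $|x|$ thanks to bounded tile diameters), telescope, and sum over the finitely many admissible sequences in $S^{m}$ with total $x$; shift-invariance of $\p$ then reduces convergence to the neighbour case. Property~(1) comes from passing to the $L^{2}$-limit in the algebraic identity $\psi_{\e}\circ\tau_{x}-\psi_{\e}\circ\tau_{y}=(\psi_{\e}\circ\tau_{x-y}-\psi_{\e})\circ\tau_{y}$. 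For property~(2), applying $Q$ to the defining equation gives
\[
\tfrac{1}{4}\!\sum_{e\in S_{\omega}}(\psi_{\e}\circ\tau_{e}-\psi_{\e})=-V+\e\psi_{\e},
\]
and noting that $V\perp\ker(I-Q)=\{\text{constants}\}$ (by the mean-zero property of $V$, inherited from the rotational symmetry of $\p$ or from the annealed result of \cite{telcs}, together with the ergodicity stated in Section~2) forces $\|\e\psi_{\e}\|_{2}^{2}=\int\e^{2}(1+\e-\lambda)^{-2}d\mu_{V}\to\mu_{V}(\{1\})=0$, so taking $\e\to 0$ and using the cocycle gives $\frac{1}{4}\sum_{e\in S_{\omega}}\chi(e,\omega)=-V(\omega)$ $\p$-almost surely, which is exactly harmonicity of $x\mapsto\chi(x,\omega)+x$ under the transitions \eqref{atmenet}.

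\textbf{Main obstacle.} I expect the technically most delicate point to be isolating the reversibility identity used in the antisymmetrization step, since the configuration space $\Omega$ is the translation orbit of a single tiling rather than a product space, so one must be careful to extract the correct shift-invariance statement from the measure $\p$ built in Section~2. Once that identity is in hand the estimate $\mathcal{E}(\psi_{\e},\psi_{\e})\le C_{S}^{2}$ falls out in a few lines, and the remainder of the argument is essentially spectral calculus combined with the ergodicity supplied by Section~2.
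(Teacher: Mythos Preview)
Your proposal is correct and follows essentially the same Kipnis--Varadhan route as the paper: the antisymmetrization of $V$ plus Cauchy--Schwarz (your ``uniform Dirichlet bound'' step) is exactly the paper's Lemma~\ref{1}, the spectral Cauchy estimate on $\mathcal{E}(\psi_{\e}-\psi_{\e'},\psi_{\e}-\psi_{\e'})$ is the paper's Lemma~\ref{2}, and the path-telescoping definition of $\chi$ together with the harmonicity argument via $\e\psi_{\e}\to 0$ matches the paper's proof of the theorem. The only cosmetic difference is ordering---you derive $\sup_{\e}\mathcal{E}(\psi_{\e},\psi_{\e})<\infty$ first and deduce $\int(1-\lambda)^{-1}d\mu_{V}<\infty$ by monotone convergence, whereas the paper proves the spectral integrability directly---and your worry about the reversibility identity is unfounded: it is precisely the ``symmetry'' the paper invokes without comment, and follows immediately from the shift-invariance of~$\p$.
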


We call the function in the Theorem \ref{korrektor} the corrector. The value
of the corrector at the point $x$ is intuitively the amount 'by which the
drift collected by a walk starting from $x$ is larger than that of a walk
starting from the origin'.

To prove Theorem \ref{korrektor} we examine the $\mu_V$ spectral measure of
the (self-adjoint) operator $Q:L^2\rightarrow L^2$ with respect to $V$, i.
e. the measure for which any continuous, bounded function $%
\phi:[-1,1]\rightarrow\mathbb{R}$ satisfies 
\begin{equation*}
<V,\phi(Q)V>=\int\limits_{-1}^1\phi(\lambda)\mu_V(\mathrm{d}\lambda).
\end{equation*}
(Using that $\mathrm{spec}(Q)\subseteq[-1,1]$ implies $\mathrm{supp}(\mu_V)%
\subseteq[-1,1]$.) The following lemma is about this measure.

\begin{lemma}
\label{1} 
\begin{equation*}
\int\limits_{-1}^1\frac{1}{1-\lambda}\mu_V(\mathrm{d}\lambda)<\infty.
\end{equation*}
\end{lemma}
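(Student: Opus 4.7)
I would use the variance criterion of Kipnis--Varadhan. The key intermediate quantity is the additive functional $Y_n := \sum_{k=0}^{n-1} V(\tau_{X_k}\omega)$. Under the annealed law (initial environment drawn from $\mathbb{P}$, walk under $P_\omega$), the environment-seen-from-the-walker $(\tau_{X_n}\omega)_{n\ge 0}$ is a stationary Markov chain with transition operator $Q$ (stationarity follows from the invariance of $\mathbb{P}$ under the shifts $T_1,\dots,T_4$ established by Kunz). Self-adjointness of $Q$ together with the spectral theorem give, for every $0\le j\le k$,
\begin{equation*}
\mathbb{E}\bigl[V(\tau_{X_j}\omega)\cdot V(\tau_{X_k}\omega)\bigr] \;=\; \langle V,\,Q^{k-j}V\rangle \;=\; \int_{-1}^1 \lambda^{k-j}\,\mu_V(d\lambda).
\end{equation*}
Summing over $0\le j,k\le n-1$ produces
\begin{equation*}
\mathbb{E}|Y_n|^2 \;=\; \int_{-1}^1 g_n(\lambda)\,\mu_V(d\lambda), \qquad g_n(\lambda) := \sum_{j,k=0}^{n-1}\lambda^{|j-k|}.
\end{equation*}
The function $g_n$ is nonnegative (it is the quadratic form of the all-ones vector against the positive semidefinite Toeplitz matrix $(\lambda^{|j-k|})_{j,k}$) and satisfies $g_n(\lambda)/n\to (1+\lambda)/(1-\lambda)$ pointwise on $[-1,1)$.

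The core estimate to establish is $\mathbb{E}|Y_n|^2\le Cn$. For this, decompose $X_n=Y_n+N_n$ with $N_n := \sum_{k=0}^{n-1}\bigl[(X_{k+1}-X_k)-V(\tau_{X_k}\omega)\bigr]$. By definition of $V$, $N_n$ is an $\mathbb{R}^2$-valued $P_\omega$-martingale whose increments are bounded in norm by $\operatorname{diam}(S)<\infty$, so $E_\omega|N_n|^2\le Cn$ for every $\omega$, and therefore $\mathbb{E}|N_n|^2\le Cn$. The annealed invariance principle proved in \cite{telcs} provides the matching bound $\mathbb{E}|X_n|^2\le Cn$ (the uniformly bounded step sizes supply the uniform integrability needed to upgrade the weak annealed CLT to this second-moment statement). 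Combining via $|Y_n|^2\le 2|X_n|^2+2|N_n|^2$ yields $\mathbb{E}|Y_n|^2\le Cn$.

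Applying Fatou's lemma to the nonnegative sequence $g_n/n$,
\begin{equation*}
\int_{-1}^1 \frac{1+\lambda}{1-\lambda}\,\mu_V(d\lambda) \;\le\; \liminf_{n\to\infty}\,\frac{1}{n}\int_{-1}^1 g_n(\lambda)\,\mu_V(d\lambda) \;=\; \liminf_{n\to\infty}\,\frac{\mathbb{E}|Y_n|^2}{n} \;<\;\infty,
\end{equation*}
and since $(1-\lambda)^{-1}\le (1+\lambda)/(1-\lambda)$ for $\lambda\in[0,1)$ while $(1-\lambda)^{-1}$ is bounded on $[-1,0]$, the claimed finiteness follows. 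The only step requiring real work is the passage from Telcs's annealed scaling limit to the pointwise diffusive bound $\mathbb{E}|X_n|^2\le Cn$; once that is in hand, the remainder is a routine spectral calculation combined with the martingale-plus-drift decomposition.
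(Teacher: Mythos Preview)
Your route is genuinely different from the paper's and, as written, contains a real gap at exactly the step you flag.

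The paper never touches the additive functional $Y_n$ or invokes the annealed CLT. It proves directly that $V$ lies in the ``Dirichlet dual'' of $L^2$: using the antisymmetry $e\in S_\omega \Leftrightarrow -e\in S_{\tau_e\omega}$ together with Cauchy--Schwarz on the edge space, one obtains
\[
|\langle f,\,a\cdot V\rangle|^2 \;\le\; C\,|a|^2\,\langle f,(1-Q)f\rangle
\]
for every bounded $f\in L^2$ and $a\in\mathbb{R}^2$. Taking $f=a\cdot\varphi(Q)V$ turns this into $\bigl(\int\varphi\,d\mu_V\bigr)^2\le C\int(1-\lambda)\varphi(\lambda)^2\,d\mu_V$; choosing $\varphi_\epsilon(\lambda)=\min(\epsilon^{-1},(1-\lambda)^{-1})$ and letting $\epsilon\searrow 0$ by monotone convergence finishes the lemma. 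This is entirely self-contained and uses only reversibility.

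Your scheme instead reduces the spectral bound to the diffusive estimate $\mathbb{E}|X_n|^2\le Cn$, to be harvested from \cite{telcs}. The sentence ``uniformly bounded step sizes supply the uniform integrability'' is where the argument breaks: bounded steps yield only the trivial pointwise bound $|X_n|\le Cn$, hence $|X_n|^2/n\le C^2 n$, which says nothing about uniform integrability of $|X_n|^2/n$. Weak convergence $X_n/\sqrt{n}\Rightarrow Z$ by itself gives, via Fatou, only the lower bound $\liminf_n n^{-1}\mathbb{E}|X_n|^2\ge \mathbb{E}|Z|^2$ --- the wrong direction. To obtain the upper bound you would need an independent a~priori diffusive moment estimate (e.g.\ a Nash or heat-kernel bound on the Penrose graph), which is considerably more input than the paper's short Dirichlet-form computation and imports the annealed result as a black box. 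Until that step is actually supplied, the proposal does not establish the lemma.
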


\begin{proof}
Let $f\in L^2$ be a bounded function. From the symmetry one can easily see that
\[\sum_{e\in S}e\mathbb{E}(f\mathbbm{1}\{e\in S_{\omega}\})=\frac{1}{2}\sum_{e\in S}e\mathbb{E}((f-f\circ\tau_e)\mathbbm{1}\{e\in S_{\omega}\}).\]
So, for any fixed vector $a\in\mathbb{R}^2$:
\begin{align}\label{faV}
<f,a\cdot V>&=\mathbb{E}[f(a\cdot V)]
			=\mathbb{E}\left[f\left(a\cdot\left(\frac{1}{4}\sum_{e\in S}e\mathbbm{1}\{e\in S_{\omega}\}\right)\right)\right]\notag\\
			&=\frac{1}{4}\sum_{e\in S}(e\cdot a)\mathbb{E}[f\mathbbm{1}\{e\in S_{\omega}\}]\notag\\
			&=\frac{1}{2}\frac{1}{4}\sum_{e\in S}(e\cdot a)\mathbb{E}[(f-f\circ\tau_e)\mathbbm{1}\{e\in S_{\omega}\}]\notag\\
            &\le\frac{1}{2}\left(\frac{1}{4}\sum_{e\in S}(e\cdot a)^2\mathbb{P}(e\in S_{\omega})\right)^{\frac{1}{2}}
            \left(\frac{1}{4}\sum_{e\in S}\mathbb{E}[(f-f\circ\tau_e)^2\mathbbm{1}\{e\in S_{\omega}\}]\right)^{\frac{1}{2}}
\end{align}
Here the inequality is the consequence of the Cauchy--Schwartz inequality since the mapping
\[(f_1,f_2)\mapsto\frac{1}{4}\sum_{e\in S}\mathbb{E}[f_1f_2]\]
is a scalar product on the functions defined on $\Omega\times S$ and in our case $f_1=(e\cdot a)\mathbbm{1}\{e\in S_{\omega}\}$,
$f_2=(f-f\circ\tau_e)\mathbbm{1}\{e\in S_{\omega}\}$. The first term of the product obtained in \eqref{faV} is a constant multiple of $|a|$,
while the other term (because of the symmetry) can be written in the following form:
\[\left(\frac{2}{4}\sum_{e\in S}\mathbb{E}(f(f-f\circ\tau_e)\mathbbm{1}\{e\in S_{\omega}\})\right)^{\frac{1}{2}}=
\left(2<f,(1-Q)f>\right)^{\frac{1}{2}}.\]
Therefore we obtained that there exists a constant $C<\infty$ with which for any function $f\in L^2$: 
\[|<f,a\cdot V>|^2\le C|a|^2<f,(1-Q)f>\]
If we apply this result for a function of the form $f=a\cdot\varphi(Q)V$ where we write the two coordinate vector in the place of $a$ we get the
following: for every continuous, bounded function $\varphi:[-1,1]\mapsto\mathbb{R}$
\begin{align*}
\left|\int\limits_{-1}^1\varphi(\lambda)\mu_V(\mathrm{d}\lambda)\right|^2&=|<\varphi(Q)V,V>|^2\\
&=|<(1,0)\cdot\varphi(Q)V,(1,0)\cdot V>+<(0,1)\cdot\varphi(Q)V,(0,1)\cdot V>|^2\\
&\le 2|<(1,0)\cdot\varphi(Q)V,(1,0)\cdot V>|^2+2|<(0,1)\cdot\varphi(Q)V,(0,1)\cdot V>|^2\\
&\le 2C[<(1,0)\cdot\varphi(Q)V,(1,0)\cdot(1-Q)\varphi(Q)V>\\
&\quad\ +<(0,1)\cdot\varphi(Q)V,(0,1)\cdot(1-Q)\varphi(Q)V>]\\
&=2C<\varphi(Q)V,(1-Q)\varphi(Q)V>\\
&=2C\int\limits_{-1}^1(1-\lambda)\varphi(\lambda)^2\mu_V(\mathrm{d}\lambda).
\end{align*}
Substituting $\varphi_{\epsilon}(\lambda)=\mathrm{min}(\frac{1}{\epsilon},\frac{1}{1-\lambda})$ with  $\varphi$ and using that
$(1-\lambda)\varphi_{\epsilon}(\lambda)\le 1$, we get
\[\left|\int\limits_{-1}^1\varphi_{\epsilon}(\lambda)\mu_V(\mathrm{d}\lambda)\right|^2\le
2C\int\limits_{-1}^1\varphi_{\epsilon}(\lambda)\mu_V(\mathrm{d}\lambda),\]
therefore
\[\int\limits_{-1}^1\varphi_{\epsilon}(\lambda)\mu_V(\mathrm{d}\lambda)\le 2C.\]
Using the monotone convergence theorem,
\[\int\limits_{-1}^1\frac{1}{1-\lambda}\mu_V(\mathrm{d}\lambda)=
\sup_{\epsilon>0}\int\limits_{-1}^1\varphi_{\epsilon}(\lambda)\mu_V(\mathrm{d}\lambda)\le 2C<\infty.\]
\end{proof}

Our next lemma is about $\psi_{\epsilon}$ defined above.

\begin{lemma}
\label{2} 
\begin{equation*}
\lim_{\epsilon\searrow 0}\epsilon||\psi_{\epsilon}||_2^2=0.
\end{equation*}
Furthermore, if for $e\in S$, we define $G_e^{(\epsilon)}(\omega)=\mathbbm{1}%
\{e\in S_{\omega}\}(\psi_{\epsilon}\circ\tau_e-\psi_{\epsilon})(\omega)$,
then for any $x\in H$ and for any $e\in S$ the following holds: 
\begin{equation*}
\lim_{\epsilon_1,\epsilon_2\searrow 0}||\mathbbm{1}\{x\in
H_{\omega}\}(G_e^{(\epsilon_1)}\circ\tau_x-G_e^{(\epsilon_2)}\circ%
\tau_x)||_2=0.
\end{equation*}
\end{lemma}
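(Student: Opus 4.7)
The plan is to use the spectral calculus for the self-adjoint operator $Q$ together with Lemma \ref{1}. Since $\psi_\epsilon=(1+\epsilon-Q)^{-1}V$, both $\|\psi_\epsilon\|_2^2$ and the Dirichlet form $\langle \psi_\epsilon,(1-Q)\psi_\epsilon\rangle$ admit direct spectral representations against $\mu_V$, and the estimates will all be of dominated-convergence type, controlled uniformly by the integrability $\int(1-\lambda)^{-1}\mu_V(\mathrm{d}\lambda)<\infty$ provided by Lemma \ref{1}.

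For the first assertion, I would write
\[\epsilon\|\psi_\epsilon\|_2^2=\int_{-1}^1\frac{\epsilon}{(1+\epsilon-\lambda)^2}\mu_V(\mathrm{d}\lambda).\]
For $\lambda<1$ the integrand vanishes as $\epsilon\searrow 0$. The elementary inequality $\epsilon(1-\lambda)\le(1+\epsilon-\lambda)^2$ (obtained by expanding the square after substituting $u=1-\lambda$) gives the bound $\epsilon/(1+\epsilon-\lambda)^2\le 1/(1-\lambda)$, which is $\mu_V$-integrable by Lemma \ref{1}; the same lemma also forces $\mu_V(\{1\})=0$. Dominated convergence then yields $\epsilon\|\psi_\epsilon\|_2^2\to 0$.

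For the second assertion the crucial step is to reduce the claim, via the same symmetrization idea used in the proof of Lemma \ref{1}, to a spectral quantity in $h:=\psi_{\epsilon_1}-\psi_{\epsilon_2}$. Exploiting the symmetry $e\mapsto-e$ of $S$, the identity $\sum_{e\in S}\mathbbm{1}\{e\in S_\omega\}\equiv 4$, and the measure-preserving action of $\tau_e$ sending $\{e\in S_\omega\}$ onto $\{-e\in S_\omega\}$, one derives
\[\sum_{e\in S}\|G_e^{(\epsilon_1)}-G_e^{(\epsilon_2)}\|_2^2=8\langle h,(1-Q)h\rangle=8\int_{-1}^1(1-\lambda)\left(\frac{1}{1+\epsilon_1-\lambda}-\frac{1}{1+\epsilon_2-\lambda}\right)^2\mu_V(\mathrm{d}\lambda).\]
The integrand tends pointwise to $0$ for $\lambda<1$ and is dominated by $4/(1-\lambda)$, which is $\mu_V$-integrable by Lemma \ref{1}. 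Dominated convergence therefore gives $\|G_e^{(\epsilon_1)}-G_e^{(\epsilon_2)}\|_2\to 0$ for each individual $e\in S$.

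To pass from $x=0$ to an arbitrary $x\in H$, I would invoke that $\tau_x$, restricted to the measurable set $\{\omega:x\in H_\omega\}$, preserves $\mathbb{P}$ (by iterating the invariance under the elementary shifts along any finite chain of tiles connecting $0$ to $x$ in a valid configuration), which delivers
\[\|\mathbbm{1}\{x\in H_\omega\}(G_e^{(\epsilon_1)}-G_e^{(\epsilon_2)})\circ\tau_x\|_2\le\|G_e^{(\epsilon_1)}-G_e^{(\epsilon_2)}\|_2\to 0.\]
The step I expect to require the most care is the symmetrization identity $\sum_e\|G_e^{(\epsilon_1)}-G_e^{(\epsilon_2)}\|_2^2=8\langle h,(1-Q)h\rangle$, which rests on the detailed-balance relation $\mathbb{E}[(h\circ\tau_e)^2\mathbbm{1}\{e\in S_\omega\}]=\mathbb{E}[h^2\mathbbm{1}\{-e\in S_\omega\}]$ combined with the symmetry of $S$; once this identity is in place, the spectral-theoretic manipulations are essentially routine applications of dominated convergence supplied by Lemma \ref{1}.
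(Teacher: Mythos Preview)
Your proposal is correct and follows essentially the same route as the paper: the spectral representation $\epsilon\|\psi_\epsilon\|_2^2=\int\epsilon/(1+\epsilon-\lambda)^2\,\mu_V(\mathrm{d}\lambda)$ with domination by $1/(1-\lambda)$ for the first part, and for the second part the reduction via shift-invariance to $\|G_e^{(\epsilon_1)}-G_e^{(\epsilon_2)}\|_2$, then the identity $\sum_{e\in S}\|G_e^{(\epsilon_1)}-G_e^{(\epsilon_2)}\|_2^2=8\langle h,(1-Q)h\rangle$ (the paper writes this as $\tfrac14\sum_e=2\langle\cdot\rangle$) followed by the spectral expression and dominated convergence controlled by Lemma~\ref{1}. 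The only cosmetic difference is that the paper performs the shift-invariance reduction first and you perform it last.
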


\begin{proof}
By the definition of $\psi_{\epsilon}$,
\[\epsilon||\psi_{\epsilon}||_2^2=\int\limits_{-1}^1\frac{\epsilon}{(1+\epsilon-\lambda)^2}\mu_V(\mathrm{d}\lambda).\]
The integrand is dominated by $\frac{1}{1-\lambda}$ which tends to $0$ on $[-1,1)$ as $\e\searrow 0$. Using Lemma \ref{1} and it's consequence
that $\mu_V(\{1\})=0$, the dominated convergence theorem proves the first proposition.

In order to prove the second proposition, notice that by the shift-invariance of $\mathbb{P}$,
\[||\mathbbm{1}\{x\in
H_{\omega}\}(G_e^{(\epsilon_1)}\circ\tau_x-G_e^{(\epsilon_2)}\circ\tau_x)||_2\le||G_e^{(\epsilon_1)}-G_e^{(\epsilon_2)}||_2.\]
Considering the square of the right side and taking the average for the vectors in $S_{\omega}$:
\begin{align*}
\frac{1}{4}\sum_{e\in S}||G_e^{(\epsilon_1)}-G_e^{(\epsilon_2)}||_2^2
&=\frac{1}{4}\sum_{e\in S}||\mathbbm{1}\{x\in S_{\omega}\}((\psi_{\e_1}-\psi_{\e_2})\circ\tau_e-(\psi_{\e_1}-\psi_{\e_2}))||_2^2\\
&=2<\psi_{\e_1}-\psi_{\e_2},(1-Q)(\psi_{\e_1}-\psi_{\e_2})>\\
&=2\int\limits_{-1}^1\frac{(\e_1-\e_2)^2(1-\lambda)}{(1+\e_1-\lambda)^2(1+\e_2-\lambda)^2}\mu_V(\mathrm{d}\lambda).
\end{align*}
The integrand is dominated by $\frac{1}{1-\lambda}$ again and tends to $0$ as $\e_1,\e_2\searrow 0$. Therefore Lemma \ref{1} and the dominated
convergence theorem proves the second proposition as well.
\end{proof}

\begin{proof}[Proof of Theorem \ref{korrektor}]
Let us use the previously introduced notations. By Lemma \ref{2} $G_e^{(\e)}\circ\tau_x$ converges in $L^2$ as $\e\searrow 0$. Let
$G_{x,x+e}=\lim_{\e\searrow 0}G_e^{(\e)}\circ\tau_x$. It is clear that $G_{x,x+e}(\omega)+G_{x+e,x}(\omega)=0$ and generally:
$\sum_{k=0}^nG_{x_k,x_{k+1}}=0$, if $(x_0,\ldots,x_n)$ is a closed loop in $H_{\omega}$. Therefore the following definition makes sense:
\[\chi(x,\omega):=\sum_{k=0}^{n-1}G_{x_k,x_{k+1}}(\omega),\]
where $(x_0,\ldots,x_n)$ is an arbitrary path in $H_{\omega}$, where $x_0=0$ and $x_n=x$. As we mentioned above, this sum is independent from the
path. The claim about the shift-invariance follows from $G_{x,x+e}=G_{0,e}\circ\tau_x$.

By the shift-invariance, to prove the harmonicity of $x\mapsto x+\chi(x,\omega)$ it is enough that almost surely
\[\frac{1}{4}\sum_{e\in S_{\omega}}\chi(0,\omega)-\chi(e,\omega)=V(\omega).\]
Since $\chi(e,\cdot)-\chi(0,\cdot)=G_{0,e}$, the left side of the equation is the limit of the following as $\e\searrow 0$:
\[\frac{1}{4}\sum_{e\in S_{\omega}}\psi_{\e}-\psi_{\e}\circ\tau_e=(1-Q)\psi_{\e}.\]
By the definition of $\psi_{\e}$, we have $(1-Q)\psi_{\e}=-\e\psi_{\e}+V$. From this we get the desired equality by Lemma \ref{2}
($\e\psi_{\e}\rightarrow 0$ in $L^2$).

In order the prove part (3) of the theorem, notice that by the definition of the corrector
\[(\chi(x+e,\omega)-\chi(x,\omega))\mathbbm{1}\{x\in H_{\omega}\}\mathbbm{1}\{e\in S_{\omega}\}\circ\tau_x=G_{x,x+e}(\omega).\]
$G_{x,x+e}$ is the $L^2$-limit of the functions $G_e^{(\e)}\circ\tau_x$ that have $L^2$-norm bounded above by the $L^2$-norm of $G_e^{(\e)}$
$L^2$. This implies (3) with $C=\max_{e\in S}||G_{0,e}||_2$.
\end{proof}

\subsection{Sublinearity of the corrector along the ribbons}

Firstly, we prove the sublinear growth of the corrector along the ribbons.
For a given configuration $\omega$, let us fix one of the principal
directions directed in a particular way. Denote the centers of the tiles
along this principal direction starting at the origin by $%
z_0(\omega)=0,z_1(\omega),z_2(\omega),\ldots$

\begin{thm}
\label{szalagt} For $\mathbb{P}$-almost every $\omega\in\Omega$: 
\begin{equation*}
\lim_{k\rightarrow\infty}\frac{\chi(z_k(\omega),\omega)}{k}=0.
\end{equation*}
\end{thm}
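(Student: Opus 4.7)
The plan is to reduce the statement to Birkhoff's ergodic theorem. Let $T\omega := \tau_{z_1(\omega)}\omega$ denote the shift by one step along the chosen principal direction; by \cite{r} the transformation $T$ preserves $\mathbb{P}$ and is ergodic. Iterating the cocycle property (part (1) of Theorem \ref{korrektor}) and using that $T^i\omega = \tau_{z_i(\omega)}\omega$, the telescoping identity gives
\[
\chi(z_k(\omega),\omega) \;=\; \sum_{i=0}^{k-1} \chi\bigl(z_1(T^i\omega),\,T^i\omega\bigr) \;=\; \sum_{i=0}^{k-1} f(T^i\omega),
\]
where $f(\omega) := \chi(z_1(\omega),\omega)$. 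Since $z_1(\omega)$ takes values in the finite set $S$, part (3) of Theorem \ref{korrektor} yields
\[
\|f\|_2 \;\le\; \sum_{v\in S}\|\chi(v,\cdot)\,\mathbbm{1}\{v\in S_\omega\}\|_2 \;<\; \infty,
\]
so in particular $f\in L^1(\Omega)$, and Birkhoff's theorem gives $k^{-1}\chi(z_k,\omega) \to \mathbb{E}[f]$ for $\mathbb{P}$-almost every $\omega$.

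It therefore suffices to show that $\mathbb{E}[f]=0$. The natural route is to approximate $f$ by the finite-$\epsilon$ coboundaries $\psi_\epsilon\circ T - \psi_\epsilon$. Decomposing over the finitely many possible values of $z_1$,
\[
\psi_\epsilon\circ T - \psi_\epsilon \;=\; \sum_{v\in S}\mathbbm{1}\{z_1(\omega)=v\}\,\bigl(\psi_\epsilon\circ\tau_v - \psi_\epsilon\bigr)(\omega),
\]
and noting that $\{z_1=v\}\subseteq\{v\in H_\omega\}$, the $L^2$-convergence supplied by Theorem \ref{korrektor} for each fixed $v\in S$ combines to give $\psi_\epsilon\circ T - \psi_\epsilon \to f$ in $L^2(\Omega)$ as $\epsilon\searrow 0$. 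On the other hand, the $T$-invariance of $\mathbb{P}$ forces $\mathbb{E}[\psi_\epsilon\circ T - \psi_\epsilon] = 0$ for every $\epsilon>0$. Since $L^2$ convergence of integrands implies convergence of their expectations, passing to the limit gives $\mathbb{E}[f]=0$.

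The main (mild) subtlety is the $L^2$ identification of $f$ as a limit of coboundaries, because Theorem \ref{korrektor} is stated for a deterministic vertex $x\in H$ whereas $z_1$ is random; this is handled purely by splitting over the finitely many possible values in $S$ and using that the indicator $\mathbbm{1}\{z_1=v\}$ is bounded. Everything else is a clean combination of the cocycle structure and $L^2$-bound from Theorem \ref{korrektor}, the ergodicity of the principal-direction shift from \cite{r}, and Birkhoff's pointwise ergodic theorem.
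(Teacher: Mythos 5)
Your proposal is correct and follows essentially the same route as the paper: define $f(\omega)=\chi(z_1(\omega),\omega)$, establish $f\in L^1$ via the uniform $L^2$ bound on nearest-neighbor corrector increments, show $\mathbb{E}[f]=0$ by approximating $f$ with the coboundaries $\psi_\epsilon\circ T-\psi_\epsilon$ (which have zero mean by $T$-invariance of $\mathbb{P}$), telescope via the cocycle identity, and invoke Birkhoff's ergodic theorem with the ergodicity of $T$ from \cite{r}. The only cosmetic difference is that the paper bounds $|\chi_\epsilon(z_1(\omega),\omega)|$ by $\sum_{e\in S}|G_e^{(\epsilon)}(\omega)|$ before passing to the limit, whereas you bound the limiting $f$ directly by $\sum_{v\in S}|\chi(v,\cdot)|\mathbbm{1}\{v\in S_\omega\}$ using part (3); these are interchangeable.
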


Firstly, we prove the following:

\begin{prop}
\begin{align*}
\mathbb{E}(|\chi(z_1(\cdot),\cdot)|)&<\infty, \\
\mathbb{E}(\chi(z_1(\cdot),\cdot))&=0.
\end{align*}
\end{prop}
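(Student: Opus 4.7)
The plan is to realize $\chi(z_1(\omega),\omega)$ as an $L^2$-limit of coboundaries $\psi_\epsilon\circ T-\psi_\epsilon$, where $T\omega:=\tau_{z_1(\omega)}\omega$ is the ``one step along the fixed principal direction'' shift on $\Omega$. Since $\mathbb{P}$ is $T$-invariant (this is the property whose ergodic strengthening is invoked in Theorem~\ref{szalagt}), every such coboundary has expectation zero, so the mean claim will drop out by passing to the $L^2$ limit; integrability will come from part~(3) of Theorem~\ref{korrektor} together with the finiteness of $S$.

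For integrability, observe first that $\chi(0,\omega)=0$ by the path definition of the corrector, so part~(1) of Theorem~\ref{korrektor} (with $y=0$) gives $\chi(z_1(\omega),\omega)=G_{0,z_1(\omega)}(\omega)$. Since $z_1(\omega)$ takes only finitely many values in $S$, we may decompose
\[\chi(z_1(\omega),\omega)=\sum_{e\in S}\mathbbm{1}\{z_1(\omega)=e\}\,G_{0,e}(\omega).\]
Part~(3) of Theorem~\ref{korrektor} gives $||G_{0,e}||_2\le C$ for every $e\in S$, and the indicators on the right have disjoint supports, so
\[\mathbb{E}\bigl(|\chi(z_1(\cdot),\cdot)|^2\bigr)=\sum_{e\in S}\mathbb{E}\bigl(\mathbbm{1}\{z_1=e\}\,|G_{0,e}|^2\bigr)\le |S|\,C^2<\infty,\]
which yields $\mathbb{E}(|\chi(z_1(\cdot),\cdot)|)<\infty$ by Cauchy--Schwarz.

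For the vanishing mean, set $\tilde G^{(\epsilon)}(\omega):=\psi_\epsilon(T\omega)-\psi_\epsilon(\omega)$. Since $z_1(\omega)\in S_\omega$,
\[\tilde G^{(\epsilon)}(\omega)=\sum_{e\in S}\mathbbm{1}\{z_1(\omega)=e\}\,G_e^{(\epsilon)}(\omega),\]
and by Lemma~\ref{2} (second statement, with $x=0$) the sequence $G_e^{(\epsilon)}$ is Cauchy in $L^2$ with limit $G_{0,e}$, so $\tilde G^{(\epsilon)}\to\chi(z_1(\cdot),\cdot)$ in $L^2$. Because $\psi_\epsilon\in L^2\subset L^1$ and $T$ preserves $\mathbb{P}$, $\mathbb{E}(\tilde G^{(\epsilon)})=0$ for every $\epsilon>0$; $L^2$ (hence $L^1$) convergence then forces $\mathbb{E}(\chi(z_1(\cdot),\cdot))=0$. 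The delicate ingredient here is the $T$-invariance of $\mathbb{P}$: the measure of Section~2 was presented through invariance under the four tile-side shifts $T_1,\ldots,T_4$, so one must identify the principal-direction step $T$ (which varies with the local tile geometry around the origin) with the appropriate $T_i$ in order to transfer invariance to $T$ itself; this is the main point to be checked carefully before the limiting argument goes through.
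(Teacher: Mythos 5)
Your proof is correct and follows essentially the same route as the paper: both realize $\chi(z_1(\cdot),\cdot)$ as the $L^2$-limit of the coboundaries $\psi_\epsilon\circ\tau_{z_1}-\psi_\epsilon$, both get integrability from the uniform $L^2$ bound of part~(3) of Theorem~\ref{korrektor} (the paper bounds $|\chi_\epsilon(z_1(\omega),\omega)|\le\sum_{e\in S}|G_e^{(\epsilon)}(\omega)|$ before passing to the limit, while you split along the disjoint events $\{z_1=e\}$ in the limit directly, a cosmetic difference), and both deduce the zero mean from invariance of $\mathbb{P}$ under $\sigma:\omega\mapsto\tau_{z_1(\omega)}\omega$. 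The invariance concern you raise at the end is legitimate but is equally implicit in the paper: the ergodicity of $\sigma$ invoked for Theorem~\ref{szalagt} (via Robinson's Theorem~A) already presupposes $\sigma$-invariance, and the paper simply writes $\mathbb{E}(\psi_\epsilon\circ\tau_{z_1}-\psi_\epsilon)=0$ without further comment.
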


\begin{proof}
$\chi(x,\cdot)$ is the $L^2$-limit of the functions $\chi_{\e}(x,\cdot)=\psi_{\e}\circ\tau_x-\psi_{\e}$
(as $\e\searrow 0$) on the set $\{x\in H_\omega\}$. It is clear that
\[|\chi_{\e}(z_1(\omega),\omega)|\le\sum_{e:e\in S}|G_e^{(\e)}(\omega)|.\]
By Theorem \ref{korrektor} $||G_e^{(\e)}||_2<C$ for every $e$ and $\e>0$. This yields $\chi(z_1(\cdot),\cdot)\in L^1$.

$\mathbb{E}(\chi(z_1(\cdot),\cdot))=0$ follows from the fact that $\chi_{\e}(z_1(\cdot),\cdot)$ converges to $\chi(z_1(\cdot),\cdot)$ in $L^2$,
and $\mathbb{E}(\chi_{\e}(z_1(\cdot),\cdot))=\mathbb{E}(\psi_{\e}\circ\tau_{z_1}-\psi_{\e})=0$.
\end{proof}

\begin{proof}[Proof of Theorem \ref{szalagt}]
Let $f(\omega)=\chi(z_1(\omega),\omega)$ and let $\sigma:\Omega\rightarrow\Omega$ be the shift $\omega\mapsto\tau_{z_1(\omega)}(\omega)$.
$\sigma$ is ergodic (\cite{r}, Theorem A) and from the above $f\in L^1$, $\mathbb{E}(f)=0$. Therefore Birkhoff's ergodic theorem and part (1) of
Theorem \ref{korrektor} yields
\[\lim_{k\rightarrow\infty}\frac{\chi(z_k(\omega),\omega)}{k}=\lim_{k\rightarrow\infty}\frac{\sum_{l=0}^{k-1}f\circ\sigma^l(\omega)}{k}=0.\]
\end{proof}

\subsection{Sublinearity on the whole plane}

After proving the sublinear growth of the corrector along the ribbons, we
can do the same on the whole plane.

\begin{thm}
\label{szublin} 
\begin{equation*}
\lim_{n\rightarrow\infty}\max_{\substack{ x\in H_{\omega}  \\ |x|\le n}}%
\frac{\chi(x,\omega)}{n}=0
\end{equation*}
$\mathbb{P}$-almost surely where $|x|$ denotes the graph distance from the
origin.
\end{thm}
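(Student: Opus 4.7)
My plan is to bootstrap the ribbon-wise statement of Theorem~\ref{szalagt} to a sublinearity estimate on the whole plane, exploiting the fact that the Penrose graph is built from five principal ribbon directions rather than just one. Because the proof of Theorem~\ref{szalagt} uses only that the shift along one chosen principal direction is ergodic, and \cite{r} provides ergodicity along each principal direction, the same argument yields, for $\mathbb{P}$-almost every $\omega\in\Omega$, sublinear growth of the corrector along every ribbon through the origin, simultaneously in all five principal directions.

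To reach an arbitrary tile center $x\in H_\omega$ with $|x|\le n$, I would connect the origin to $x$ by a concatenation of a bounded number of ribbon segments lying in distinct principal directions. Two segments should suffice: since any two ribbons in different principal directions intersect, one can travel along a ribbon through the origin to the intersection point $y$ with the ribbon through $x$ in another principal direction, and then along the second ribbon to $x$. Each segment has length $O(n)$. Ribbon sublinearity through the origin directly controls $\chi(y,\omega)$; for the second segment, the cocycle relation from Theorem~\ref{korrektor}(1) yields
\[
\chi(x,\omega)-\chi(y,\omega)=\chi(x-y,\tau_y\omega),
\]
reducing the task to ribbon sublinearity through the origin of the shifted environment $\tau_y\omega$.

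The principal obstacle is the tension between the quenched ``for $\mathbb{P}$-a.e.\ $\omega$'' in Theorem~\ref{szalagt} and the fact that the switch point $y=y(\omega,x)$ depends on both $\omega$ and $x$, so a priori the exceptional null sets associated with different shifts $\tau_y$ could accumulate. I expect to handle this by viewing the environment chain $\tau_{z_k(\omega)}\omega$ as the orbit of the ergodic shift $\sigma$ introduced in the proof of Theorem~\ref{szalagt}: Birkhoff's theorem applied to the indicator of the ``good set'' on which ribbon sublinearity holds in every principal direction forces a proportion $1-o(1)$ of the candidate switch points $y$ along the first ribbon to be typical. The uniform $L^2$ bound on corrector increments from Theorem~\ref{korrektor}(3), combined with a Borel--Cantelli argument to absorb the $o(n)$ exceptional transitions, should then yield $\max_{x\in H_\omega,\,|x|\le n}|\chi(x,\omega)|/n\to 0$ almost surely.
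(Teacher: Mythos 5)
Your high-level plan — bootstrap ribbon-wise sublinearity to the plane using intersections of ribbons in different principal directions and the cocycle relation of Theorem~\ref{korrektor}(1), controlling the shift point $y$ by ergodicity — shares its skeleton with the paper. You also correctly identify the central obstacle: the switch point $y=y(\omega,x)$ is $\omega$- and $x$-dependent, and the exceptional null set of Theorem~\ref{szalagt} could conspire against you. But the resolution you sketch has a genuine gap, and it is precisely here that the paper introduces its key tool. The two-segment path from the origin to $x$ via $y$ only controls $\chi(x,\omega)$ when $x$ actually lies on a ribbon (in the other principal direction) through a \emph{quantitatively} good $y$, i.e.\ one at which $|\chi(y,\omega)-\chi(z,\omega)|\le K+\epsilon d_\omega(y,z)$ with a \emph{fixed} $K$. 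The set of such $y$ has positive density (not density $1-o(1)$; your reference to the full-measure set where the limit is zero does not give uniformity over $y$, since the constant is $\omega$-dependent), so the good ribbons form a sparse grid, and a set of $x$'s of positive planar density — the interiors of the cells cut out by this grid — are simply not reached by any admissible two-segment path. Your plan never bounds the corrector at these interior points.

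The paper handles exactly these points by invoking the harmonicity of $x\mapsto x+\chi(x,\omega)$ from Theorem~\ref{korrektor}(2) together with the minimum/maximum principle: the corrector inside a cell is controlled by its values on the bounding good ribbons plus the cell diameter, which is $O(\epsilon n)$ by the density lemma. Your proposal does not use harmonicity at all, and your suggested substitute — absorbing the $o(n)$ steps off the grid via the $L^2$ bound of Theorem~\ref{korrektor}(3) and Borel--Cantelli — does not close the gap. That $L^2$ bound controls only the second moment of a single increment; a Chebyshev/union bound over the $O(n^2)$ edges in a ball of radius $n$ at threshold $\delta n$ gives a tail of order $O(1)$, which is not summable in $n$, so Borel--Cantelli gives nothing. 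Without harmonicity there is no mechanism in your argument to promote density of good ribbons to a uniform sublinear bound on the entire box.
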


Firstly, we introduce a few notations. Consider the two principal directions
directed in a fixed way. In a configuration $\omega$ denote by $%
a_0(\omega)=0,a_1(\omega),a_2(\omega),\ldots$ the centers of the
consecutively neighboring tiles along the first principal direction in the
positive direction, starting at the origin, and by $0,a_{-1}(\omega),a_{-2}(%
\omega),\ldots$ in the negative direction. For the other principal direction
we use the notions $b_0(\omega)=0,b_1(\omega),b_2(\omega),\ldots$ and $%
0,b_{-1}(\omega),b_{-2}(\omega),\ldots$ similarly. Furthermore, if in a
configuration $\omega$ $x,y\in H_{\omega}$ are on the same ribbon, then
denote by $d_{\omega}(x,y)$ the number of steps needed to take on the ribbon
from $x$ to $y$.

\begin{defi}
For given numbers $K>0$, $\epsilon>0$ we call the point $x\in H_{\omega}$ $%
K,\epsilon$-good in a configuration $\omega$, if 
\begin{equation*}
|\chi(x,\omega)-\chi(y,\omega)|<K+\epsilon d_{\omega}(x,y)
\end{equation*}
holds for every $y\in H_{\omega}$ that is on the same ribbon as $x$ (along
one of the principal directions). The set of the $K,\epsilon$-good points in
a configuration $\omega$ is denoted by $\mathcal{G}_{K,\epsilon}(\omega)$.
\end{defi}

There are $10$ different types of rhombuses that occur in a configuration
from $\Omega$, if we distinguish the rhombuses that can be moved into each
other by a nontrivial rotation (and translation). This is because in the
construction of the tiling the $5$ kinds of lines in the pentagrid creates $%
10$ different kinds of intersections and each has a different type of
rhombus corresponding to it. Denote these $10$ types by $R_1,R_2,%
\ldots,R_{10}$. Let $\mathcal{R}_i$ be the event that the rhombus of the
origin is of type $R_i$. From Theorem \ref{szalagt} we have that for every $%
\epsilon>0$ and $i\in\{1,\ldots,10\}$ there exists $K>0$ such that $\mathbb{P%
}(0\in \mathcal{G}_{K,\epsilon},\ \mathcal{R}_i)>0$. Therefore for any given 
$\epsilon$ there is a $K$ which is good for every $i$ in this sense.

The following lemma is about the density of the $K,\epsilon$-good points of
a given kind.

\begin{lemma}
Fix an $i\in\{1,\ldots,10\}$. Let $\epsilon>0$ and $K$ be large enough such
that $\mathbb{P}(0\in \mathcal{G}_{K,\epsilon},\mathcal{R}_i)>0$. For given $%
n\ge 1$ and $\omega\in\Omega$ let $k_0<k_1<\ldots<k_r$ be the indices from $%
[-n,n]$ for which $a_{k_i}\in \mathcal{G}_{K,\epsilon}(\omega)$ and the
rhombus of $a_{k_i}$ is of type $R_i$ (let us call these good indices). Let 
\begin{equation*}
\Delta_n(\omega)=\max_{j=1,\ldots,r}d_{\omega}(a_{k_j},a_{k_{j-1}}).
\end{equation*}
(If there is no such $k_i$, then let $\Delta_n(\omega)=\infty$.) Then almost
surely 
\begin{equation*}
\lim_{n\rightarrow\infty}\frac{\Delta_n}{n}=0.
\end{equation*}
The same is true for the other ribbon through the origin, with $b_i$'s
instead of $a_i$'s.
\end{lemma}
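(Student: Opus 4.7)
The plan is to encode the occurrence of $(K,\epsilon)$-good indices of rhombus type $R_i$ along the first principal direction as a stationary ergodic $\{0,1\}$-sequence, and then to deduce sublinear maximal gaps by a Birkhoff plus run-length argument. The argument for the $b_k$-ribbon will be identical, using the shift along the second principal direction (also ergodic by \cite{r}).

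First, I would set
\[T_k(\omega):=\mathbbm{1}\{a_k(\omega)\in\mathcal{G}_{K,\epsilon}(\omega)\}\cdot\mathbbm{1}_{\mathcal{R}_i}(\tau_{a_k(\omega)}\omega),\qquad k\in\mathbb{Z},\]
and let $\sigma\colon\omega\mapsto\tau_{a_1(\omega)}\omega$ be the invertible ribbon-shift, which is ergodic by the result of Robinson already used in the proof of Theorem~\ref{szalagt}. The crucial observation is the shift-covariance $T_k=T_0\circ\sigma^k$, which casts the statement as one about the stationary ergodic sequence $(T_0\circ\sigma^k)_{k\in\mathbb{Z}}$ with mean $p:=\mathbb{P}(0\in\mathcal{G}_{K,\epsilon},\mathcal{R}_i)>0$. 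Verifying this covariance is the only step I expect to be non-routine: it requires using the cocycle identity of Theorem~\ref{korrektor}(1) to rewrite corrector differences at $a_k(\omega)$ in $\omega$ as corrector differences at $0$ in $\sigma^k\omega$, together with the elementary facts that the rhombus type and the ribbon distance are invariant under the translation $\tau$.

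Next, put $E_m:=\{T_0=T_1=\cdots=T_{m-1}=0\}$. The events $E_m$ decrease to $\{T_k=0\ \forall k\ge 0\}$, which is null by Birkhoff applied to $T_0$ (the averages tend to $p>0$, so $T_k=1$ infinitely often almost surely). So for any $\delta>0$ one can fix $m=m(\delta)$ with $\mathbb{P}(E_m)$ arbitrarily small; Birkhoff applied to $\mathbbm{1}_{E_m}$ over the invertible system $(\Omega,\sigma,\mathbb{P})$ then guarantees that almost surely, for all large $n$, the number of positions in $[-n,n-m]$ that start a run of $m$ consecutive zero-$T_k$'s is much smaller than $\delta n$.

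Finally, a single gap $\Delta_n\ge 2\delta n$ would by itself produce at least $2\delta n-m$ such starting positions inside $[-n,n-m]$ (there are $\Delta_n-1$ consecutive zeros in the gap, hence $\Delta_n-m$ starting positions for a length-$m$ zero-run), and for $n$ large this contradicts the previous bound. Hence $\limsup_n\Delta_n/n\le 2\delta$ a.s., and letting $\delta$ run through a countable sequence tending to $0$ gives $\Delta_n/n\to 0$ a.s. As noted, the $b_k$-case is identical. Apart from the careful verification of the shift-covariance $T_k=T_0\circ\sigma^k$, which is the main obstacle, everything else is standard ergodic-theoretic bookkeeping together with a straightforward counting of zero-runs inside a single big gap.
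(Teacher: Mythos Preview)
Your argument is correct and shares with the paper the essential reduction: encode the good indices as $T_k=T_0\circ\sigma^k$ for the ergodic ribbon-shift $\sigma$, and then use Birkhoff. The implementations diverge from there. The paper applies Birkhoff directly to $T_0$, so that the partial sums $\sum_{k=0}^{L}T_k$ lie in a narrow window around $pL$ for all large $L$; a gap $[L_1,L_2]$ of length $\gtrsim\delta n$ then forces the identical values $\sum_{k=0}^{L_1}T_k=\sum_{k=0}^{L_2}T_k$ to sit in two disjoint windows, a contradiction. You instead apply Birkhoff to the run indicator $\mathbbm{1}_{E_m}$ and count starting positions of length-$m$ zero-runs: a big gap manufactures $\gtrsim 2\delta n$ such starts, while ergodic averages cap their number near $2n\,\mathbb{P}(E_m)$, which can be made $<\delta n$ by choosing $m$ large. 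Your route is arguably a bit cleaner (no ad hoc interval arithmetic, no need to locate the gap away from a fixed window $[-N,N]$), while the paper's is slightly more direct in that it invokes Birkhoff only once. One remark: the shift-covariance $T_k=T_0\circ\sigma^k$ that you flag as the ``main obstacle'' is exactly what the paper is using, without comment, when it writes $\mathbbm{1}\{0\in\mathcal{G}_{K,\epsilon},\mathcal{R}_i\}\circ\sigma^k$; your sketch via the cocycle identity of Theorem~\ref{korrektor}(1) and the $\tau$-invariance of rhombus type and ribbon distance is the right way to justify it, and it is indeed routine rather than delicate.
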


\begin{proof}
Since $\sigma:\Omega\rightarrow\Omega,\ \omega\mapsto\tau_{a_1}(\omega)$ is ergodic with respect to $\mathbb{P}$, Birkhoff's theorem gives
\begin{equation}\label{lim}
\lim_{n\rightarrow\infty}\frac{1}{n+1}\sum_{k=0}^n\mathbbm{1}\{0\in \mathcal{G}_{K,\e},\ \mathcal{R}_i\}\circ\sigma^k=\mathbb{P}(0\in
\mathcal{G}_{K,\e},\ \mathcal{R}_i)=:p
\end{equation}
almost surely. Now suppose that $\frac{\Delta_n}{n}$ does not tend to $0$, so there is a $\delta>0$ for which $\Delta_n>\delta n$ for infinitely
many $n$'s. By \eqref{lim} there exists an $N$ for which if $n>N$ then
\[\frac{1}{n+1}\sum_{k=0}^n\mathbbm{1}\{0\in \mathcal{G}_{K,\e},\ \mathcal{R}_i\}\circ\sigma^k\in I=(p(1-\delta/16),p(1+\delta/16)),\]
and the same is true when $k$ goes to $(-n)$ in the summation. Let us fix a large $K$ for which $(K\delta-2N-1)/2>K\delta/4$, and
$\Delta_K>\delta K$. Then in $[-K,K]$ there is an interval $[L_1,L_2]$ which is disjoint from $[-N,N]$ and has length larger than $K\delta/4$ in
which there is not any good index. We can assume that this is on the positive half line. Then
\[\sum_{k=0}^{L_2}\mathbbm{1}\{0\in \mathcal{G}_{K,\e},\ \mathcal{R}_i\}\circ\sigma^k\in (L_1+1)I\cap (L_2+1)I.\]
But on the right-hand side, we have the empty set since
\[(L_1+1)p(1+\delta/16)<(L_2+1)p(1-\delta/16),\]
as
\[(L_2-L_1)p-p\delta(L_1+L_2+2)/16>K\delta p/4-p\delta 2K/16>0.\]
\end{proof}

\begin{proof}[Proof of Theorem \ref{szublin}]
Let us fix an $\e>0$ and then a $K_0$ such that  $\mathbb{P}(0\in \mathcal{G}_{K,\e},\mathcal{R}_i)>0$ is true for any $i$ when $K\ge K_0$. Let
$\Omega^*\subseteq\Omega$ be the set of those configurations for which the statement of the previous lemma applies in both principal directions
and for which the part (1) of Theorem \ref{korrektor} applies. ($\mathbb{P}(\Omega^*)=1.$) Let us fix an $\omega\in\Omega^*$ and a $K\ge K_0$ for
which $0\in \mathcal{G}_{K,\e}$. (There exists such $K$ by Theorem \ref{szalagt}.) Let the rhombus of the origin be of type $R_o$. Let
$(x_k)_{k\in\mathbb{Z}}$ be the increasing two-sided sequence for which the points $a_{x_k}$ are exactly the $K,\e$-good points of the
corresponding ribbon through the origin of which the rhombuses are of type $R_o$. Let $n_1(\omega)$ be the least integer for which every $n\ge
n_1(\omega)$ satisfies $\Delta_n/n<\e$ where in the definition of $\Delta$ we take into account the rhombuses of type $R_o$. We define the
indices $(y_k)_{k\in\mathbb{Z}}$ similarly for the other ribbon through the origin; $n_2(\omega)$ corresponds to $n_1(\omega)$ in this case. For
positive $n$ let $u(n)$ be the largest integer for which $x_{u(n)}<n$ while $u(-n)$ is the least integer for which $x_{u(-n)}>-n$. We define the
numbers $v(n)$, $v(-n)$ similarly for the $y$'s instead of the $x$'s. Let $n_0\ge n_1,n_2$ be so large that for $n\ge n_0$ $u(n)$ and $v(n)$ are
positive while $u(-n)$ and $v(-n)$ are negative. Let us denote by $H_n$ the centers of tiles in the area enclosed by the ribbons going through
the points $a_{x_{u(n)}},a_{x_{u(-n)}},b_{y_{v(n)}}$ and $b_{y_{v(-n)}}$ but not through the origin. (This area is finite since the bounding
ribbons on opposite sides are parallel to each other.) We will prove that for every $n\ge n_0(\omega)$
\[\max_{x\in H_n}|\chi(x,\omega)|\le 2K+c\e n\]
with some constant $c$.

Let us denote by $\mathbb{G}$ the union of the good ribbons, where we call a ribbon good if it goes through one of the points $a_{x_k}$ or
$b_{y_k}$. (Since the rhombuses are of the same type, every good ribbon runs along one of the two principal directions.) In order to bound the
value of the corrector, assume that $x\in H_n\setminus\mathbb{G}$. Then $x$ lies in a part of the plane, which is enclosed by two parallel good
ribbons. Two neighboring parallel good ribbons are at most $\e n$ steps from each other measured on the ribbon which goes through the origin and
intersects both of them. Therefore, from $x$ we can access a good ribbon in at most $c_1\e n$ steps, where $c_1$ is a universal constant. So,
using the harmonicity of the function $x\mapsto x+\chi(x,\omega)$, the minimum/maximum principle gives
\begin{equation}\label{max}
\max_{x\in H_n\setminus\mathbb{G}}|\chi(x,\omega)|\le c\e n+\max_{x\in H_{n}\cap\mathbb{G}}|\chi(x,\omega)|
\end{equation}
with some universal constant $c$.

In order to bound the value of the corrector on $H_{n}\cap\mathbb{G}$, let us firstly consider a ribbon that goes through the point $b_{y_k}$ but
not through the origin. For all points $x\in H_{n}$ of this ribbon it is true that $|\chi(x,\omega)-\chi(b_{y_k},\omega)|\le K+\e n$, since
$b_{y_k}\in \mathcal{G}_{K,\e}$ and the shift-invariance holds. Using the same argument for the ribbon connecting $b_{y_k}$ and the origin, we get
\[|\chi(x,\omega)|\le 2K+2\e n\]
for every $x\in\mathbb{G}\cap H_{n}$. Using this and the bound from \eqref{max} we get
\[\max_{x\in H_n}|\chi(x,\omega)|\le 2K+(c+2)\e n.\]
$\e$ was arbitrary, therefore this proves the theorem.
\end{proof}

\section{Invariance principle on the corrected Penrose graph}

The next lemma shows the martingale property of the corrected walk.

\begin{lemma}
\label{martingale} Let us fix $\omega\in\Omega$. Given a path of random walk 
$(X_n)_{n\ge 0}$ with law $P_{\omega}$, let 
\begin{equation*}
M_n^{(\omega)}=X_n+\chi(X_n,\omega)\quad n\ge 0.
\end{equation*}
Then $(M_n^{(\omega)})_{n\ge 0}$ is an $L^2$-martingale for the filtration $%
(\sigma(X_0,\ldots,X_n))_{n\ge 0}$. Moreover, conditioned on $X_{k_0}=x$,
the increments $(M_{k+k_0}^{(\omega)}-M_{k_0}^{(\omega)})_{k\ge 0}$ have the
same law as $(M_k^{(\tau_x\omega)})_{k\ge 0}$.
\end{lemma}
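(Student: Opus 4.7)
The plan is to read off both statements directly from properties (1) and (2) of Theorem \ref{korrektor}; no new ingredients are needed beyond the transition rule \eqref{atmenet} and the Markov property.

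Integrability is essentially free. For a fixed $\omega$, \eqref{atmenet} shows that the walk has degree $4$ and steps bounded by $\max_{e\in S}|e|$, so $X_n$, and therefore $M_n^{(\omega)}$, takes only finitely many values under $P_\omega$ for each $n$; the $L^2$ condition is automatic. For the martingale property I would compute the one-step conditional expectation directly from \eqref{atmenet}: on $\{X_n=x\}$,
\begin{equation*}
E_\omega\bigl[M_{n+1}^{(\omega)} \mid \sigma(X_0,\ldots,X_n)\bigr] = \frac{1}{4}\sum_{e\in S_{\tau_x\omega}}\bigl(x+e+\chi(x+e,\omega)\bigr).
\end{equation*}
Part (2) of Theorem \ref{korrektor} says precisely that $y\mapsto y+\chi(y,\omega)$ is harmonic with respect to this kernel, so the right-hand side equals $x+\chi(x,\omega)=M_n^{(\omega)}$, which is the martingale property.

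For the stationarity of increments I would invoke part (1) of Theorem \ref{korrektor}. On $\{X_{k_0}=x\}$,
\begin{equation*}
M_{k+k_0}^{(\omega)}-M_{k_0}^{(\omega)} = (X_{k+k_0}-x)+\chi(X_{k+k_0},\omega)-\chi(x,\omega) = (X_{k+k_0}-x)+\chi(X_{k+k_0}-x,\tau_x\omega),
\end{equation*}
where the second equality applies the cocycle identity of part (1) along the walk trajectory (which stays in $H_\omega$). The spatial Markov property built into \eqref{atmenet} shows that under $P_\omega(\,\cdot\mid X_{k_0}=x)$ the centered process $(X_{k+k_0}-x)_{k\ge 0}$ has the same law as $(X_k)_{k\ge 0}$ under $P_{\tau_x\omega}$, and the displayed identity then identifies the two laws claimed.

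No serious obstacle is expected: the lemma is essentially a direct readout of Theorem \ref{korrektor}, since the corrector was constructed precisely so as to enforce harmonicity, and the shift-covariance of $\chi$ from part (1) delivers stationarity of increments for free. The only subtlety worth flagging is that part (2) of Theorem \ref{korrektor} only holds on a full-$\mathbb{P}$-measure subset of $\Omega$, so the phrase \emph{let us fix $\omega\in\Omega$} should be read on this set; this is harmless for the intended application in the following sections.
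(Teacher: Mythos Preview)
Your proof is correct and matches the paper's approach: the paper simply defers to Lemma~6.1 of \cite{bb}, whose proof proceeds exactly as you outline---harmonicity from Theorem~\ref{korrektor}(2) gives the martingale property, and the cocycle relation from Theorem~\ref{korrektor}(1) together with the Markov property gives the stationarity of increments. Your remark that the statement should be read on the full-$\mathbb{P}$-measure set where Theorem~\ref{korrektor} applies is also apt.
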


The proof is word by word the same as for Lemma 6.1 in \cite{bb}.

The invariance principle for the corrected walk is stated by the following
theorem.

\begin{thm}
Let us fix $\omega\in\Omega$. Let $(\hat{B}_n^{(\omega)}(t):t\ge 0)$ be
defined by 
\begin{equation*}
\hat{B}_n^{(\omega)}(t)=\frac{1}{\sqrt{n}}(M_{\lfloor
tn\rfloor}^{(\omega)}+(tn-\lfloor tn\rfloor)(M_{\lfloor
tn+1\rfloor}^{(\omega)}-M_{\lfloor tn\rfloor}^{(\omega)}))\quad t\ge 0.
\end{equation*}
Then for all $T>0$ and $\mathbb{P}$-almost every $\omega$, the law of $(\hat{%
B}_n^{(\omega)}(t):0\le t\le T)$ on $(C[0,T],\mathcal{W}_T)$ ($\mathcal{W}_T$
is the usual Borel-$\sigma$-algebra) converges to the law of a Brownian
motion $(B_t:0\le t\le T)$ with diffusion matrix $D$, where 
\begin{equation*}
D=\mathbb{E}\left(E_{\omega}\left(M_1^{(\omega)}{M_1^{(\omega)}}%
^T\right)\right),
\end{equation*}
in other words, 
\begin{equation*}
D_{i,j}=\mathbb{E}(E_{\omega}((e_i\cdot M_1^{(\omega)})(e_j\cdot
M_1^{(\omega)}))),
\end{equation*}
where $e_1,e_2$ are the unit coordinate vectors. This matrix $D$ is positive
definite, so the Brownian motion in the limit is non-degenerate.
\end{thm}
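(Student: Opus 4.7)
The plan is to invoke a functional central limit theorem for $L^2$-martingales with stationary ergodic increments (Helland/Brown) applied to the martingale $(M_n^{(\omega)})_{n\ge 0}$ from Lemma~\ref{martingale}. Linear interpolation over unit intervals only modifies paths on vanishing scales, and the one-step increments are uniformly bounded in $L^2$ by Theorem~\ref{korrektor}(3); consequently, convergence in $C[0,T]$ reduces to the two standard hypotheses: convergence of the conditional quadratic variation and a Lindeberg condition.

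Set $h(\omega) := E_\omega[M_1^{(\omega)}{M_1^{(\omega)}}^T]$, which is entry-wise in $L^1(\mathbb{P})$ and satisfies $\mathbb{E}(h)=D$. The shift-covariance of the increments stated in Lemma~\ref{martingale} gives
\begin{equation*}
E_\omega\!\left[(M_{k+1}^{(\omega)}-M_k^{(\omega)})(M_{k+1}^{(\omega)}-M_k^{(\omega)})^T \,\Big|\, X_0,\ldots,X_k\right] = h(\tau_{X_k}\omega),
\end{equation*}
so the quadratic variation hypothesis becomes the quenched ergodic assertion
\begin{equation*}
\frac{1}{n}\sum_{k=0}^{\lfloor tn\rfloor - 1} h(\tau_{X_k}\omega) \longrightarrow tD
\end{equation*}
for the environment-seen-from-the-particle chain $\omega_k := \tau_{X_k}\omega$, whose transition operator is $Q$ and which admits $\mathbb{P}$ as a reversible invariant measure. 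I would derive its ergodicity from Kunz's ergodicity of the deterministic tile-shifts $T_1,\ldots,T_4$ already used in Theorem~\ref{szalagt}, by showing that any $Q$-invariant $L^2$ function must be invariant under each $T_i$ and hence $\mathbb{P}$-a.s.\ constant. The Lindeberg condition then follows by a standard truncation argument applied to the same ergodic theorem, using the uniform $L^2$-control on increments from Theorem~\ref{korrektor}(3).

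For the non-degeneracy of $D$, assume for contradiction that $v^T D v = 0$ for some unit vector $v$. Then $v\cdot M_1^{(\omega)} = 0$ for $\mathbb{P}$-almost every $\omega$ and $P_\omega$-almost surely, and scanning the four possible one-step displacements yields $v\cdot e + v\cdot \chi(e,\omega) = 0$ for every $e\in S_\omega$. Iterating this along a lattice path and invoking the cocycle identity of Theorem~\ref{korrektor}(1), one obtains
\begin{equation*}
v\cdot \chi(x,\omega) = -\,v\cdot x \quad\text{for all } x\in H_\omega.
\end{equation*}
Applied to the ribbon sequence $z_k(\omega)$, whose centers escape linearly along a principal direction $e_r$, this would force $|v\cdot\chi(z_k,\omega)|/k \to |v\cdot e_r|$, which is nonzero for a suitable choice of ribbon, directly contradicting Theorem~\ref{szalagt}.

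The main obstacle I anticipate is precisely the ergodicity of the environment-seen-from-the-particle chain. The operator $Q$ is the symmetric mixture of the four tile-shifts, and ergodicity is not an automatic consequence of Kunz's theorem for the individual $T_i$; it will require a short spectral/measure-theoretic argument exploiting the self-adjointness of $Q$ and the description of $\Omega$ as a union of tori. All the remaining pieces---tightness in $C[0,T]$, the Lindeberg bound, and non-degeneracy of $D$---fall out cleanly from the uniform $L^2$-bound in Theorem~\ref{korrektor}(3) together with the sublinearity along ribbons established in Theorem~\ref{szalagt}, once the ergodic theorem is in place.
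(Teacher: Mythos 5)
Your proposal follows essentially the same route as the paper: a martingale CLT for $(M_n^{(\omega)})$ under $P_\omega$, with the conditional-quadratic-variation and Lindeberg conditions reduced via Lemma~\ref{martingale} to ergodic averages of the environment-viewed-from-the-particle chain $\omega_k=\tau_{X_k}\omega$, and non-degeneracy of $D$ obtained by showing that $v^TDv=0$ would force the corrector to cancel the displacement, contradicting Theorem~\ref{szalagt}. The one structural difference is purely technical: the paper first applies the Cram\'er--Wold device and then the one-dimensional Lindeberg--Feller martingale CLT, while you would invoke a multidimensional martingale FCLT directly; these are interchangeable here because the interpolation error is harmless and the increments are uniformly $L^2$-bounded by Theorem~\ref{korrektor}(3).

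You are right to single out the ergodicity of the environment chain as the delicate point --- the paper simply says ``ergodicity implies'' without supplying the argument --- but your worry that it is ``not an automatic consequence of Kunz's theorem'' overstates the difficulty, and the derivation you sketch does close the gap. Concretely, if $Qf=f$ with $f\in L^2(\Omega)$, reversibility gives
\begin{equation*}
0=\langle f,(1-Q)f\rangle=\frac{1}{8}\sum_{e\in S}\mathbb{E}\bigl[(f-f\circ\tau_e)^2\mathbbm{1}\{e\in S_\omega\}\bigr],
\end{equation*}
so for each $e\in S$ we have $f=f\circ\tau_e$ a.s.\ on $\{e\in S_\omega\}$; taking the union of the exceptional null sets, one finds $f\circ T_i=f$ a.s.\ for each of Kunz's four tile-shifts $T_i$, and Kunz's joint ergodicity makes $f$ a.s.\ constant. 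No additional spectral or torus-geometry input is required. Finally, your non-degeneracy argument, which derives $v\cdot\chi(x,\omega)=-v\cdot x$ from the cocycle identity of Theorem~\ref{korrektor}(1) and then contradicts the ribbon sublinearity of Theorem~\ref{szalagt} along a principal direction with $v\cdot e_r\neq0$, is a more explicit and arguably cleaner rendering of the paper's informal statement that a sublinearly growing corrector cannot flatten a graph that escapes in every direction onto a single line.
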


\begin{proof}
Without much loss of generality, we can assume that $T=1$. Let $\mathcal{F}_k=\sigma(X_0,\ldots,X_k)$ and let us fix a vector $a\in\mathbb{R}^2$.
We will show that for any choice of $a$ the piecewise linearization of $t\mapsto a\cdot M_{\lfloor tn\rfloor}^{(\omega)}$ scales to a
one-dimensional Brownian motion with variance $t(a\cdot Da)$. Using the Cramér--Wold device (Theorem 2.9.2 of Durrett \cite{durrett}), this
implies the statement of the theorem for the scaling limit.

We will use the Lindeberg--Feller martingale CLT (Theorem 7.7.3 of Durrett \cite{durrett}). For this let us consider the following probability
variable for $m\le n$:
\[V_{n,m}^{(\omega)}(\e)=\frac{1}{n}\sum_{k=0}^mE_{\omega}(|a\cdot(M_{k+1}^{(\omega)}-M_k^{(\omega)})|^2\mathbbm{1}\{|a\cdot(M_{k+1}^{(\omega)}-
M_k^{(\omega)})|\ge\e\sqrt{n}\}|\mathcal{F}_k).\]
In order to apply the CLT we need to verify that for $\mathbb{P}$-almost every $\omega$ the following two properties hold:\\
(1) $V_{n,\lfloor tn\rfloor}^{(\omega)}(0)\rightarrow t(a\cdot Da)$ in $P_{\omega}$-probability for all $t\in[0,1]$.\\
(2) $V_{n,n}^{(\omega)}(\e)\rightarrow 0$ in $P_{\omega}$-probability for all $\e>0$.

Both of these properties will be implied by ergodicity. By the last conclusion of Lemma \ref{martingale}, we may write
\[V_{n,m}^{(\omega)}(\e)=\frac{1}{n}\sum_{k=0}^mf_{\e\sqrt{n}}(\tau_{X_k}\omega),\]
where
\[f_K(\omega)=E_{\omega}([a\cdot M_1^{(\omega)}]^2\mathbbm{1}\{|a\cdot M_1^{(\omega)}|\ge K\}).\]
Now if $\e=0$, ergodicity implies for $\mathbb{P}$-almost every $\omega$ that
\[\lim_{n\rightarrow\infty}V_{n,\lfloor tn\rfloor}^{(\omega)}(0)=t\mathbb{E}(E_{\omega}([a\cdot M_1^{(\omega)}]^2))=t(a\cdot Da).\]
$D_{i,j}$ is finite since the corrector is square integrable.

On the other hand, when $\e>0$, for any positive $K$, we have $f_{\e\sqrt{n}}\le f_K$ once $n$ is sufficiently large. Therefore
$\mathbb{P}$-almost surely
\[\limsup_{n\rightarrow\infty}V_{n,n}^{(\omega)}(\e)\le\mathbb{E}(E_{\omega}([a\cdot m_1^{(\omega)}]^2\mathbbm{1}\{|a\cdot m_1^{(\omega)}|\ge
K\}))\rightarrow 0,\]
as $K\rightarrow\infty$, where to apply the dominated convergence theorem we used that $a\cdot M_1^{(\omega)}\in L^2$. This completes the proof.

The positive definiteness of $D$ can be seen from that the contrary would mean that there is a vector $v\in\mathbb{R}^2\setminus\{{\bf 0}\}$ for
which \[0=v^TDv=\mathbb{E}\left[E_{\omega}\left[\left(v^TM_1^{(\omega)}\right)\left(v^T{M_1^{(\omega)}}\right)^T\right]\right],\] so $v^T
M_1^{(\omega)}=0$ for almost every $\omega\in\Omega$ and for all four possible initial step. But this would imply that the corrected graph is
almost surely a single point or it is located on a line. But this cannot be possible because the sublinearly growing corrector cannot push the
original graph to a line since the graph has points arbitrarily far from this line.
\end{proof}

\section{The elimination of the corrector}

Our final task is to estimate the effect of the corrector made on the path
of the walk.

\begin{proof}[Proof of Theorem \ref{main}]
We prove that $\tilde B_n(t)\Rightarrow B(t)$, where the covariance matrix of the Brownian motion $B$ is $D$ defined above.
Since $M_n^{(\omega)}=X_n+\chi(X_n,\omega)$, it is enough to show that, for $\mathbb{P}$-almost every $\omega$,
\[\max_{1\le k\le n}\frac{|\chi(X_k,\omega)|}{\sqrt{n}}\rightarrow 0\]
in $P_{\omega}$-probability as $n\rightarrow\infty$. By Theorem \ref{szublin} we know that for $\e>0$ there exists a $K=K(\omega)<\infty$ such
that
\[|\chi(x,\omega)|\le K+\e|x|\quad \forall x\in H_{\omega}.\]
If $\e<1/2$, then this implies
\[|\chi(X_k,\omega)|\le 2K+2\e|M_k^{(\omega)}|.\]
The above CLT for $(M_n^{(\omega)})$ tells us that $\max_{k\le n}|M_k^{(\omega)}|/\sqrt{n}$ converges in law to the maximum of a Brownian motion
$B(t)$ over $t\in[0,1]$. Hence, if we denote the probability law of the Brownian motion by $P$, then by the Portmanteau Theorem (Theorem 2.1 of
Billingsley \cite{billingsley}),
\begin{align*}
\limsup_{n\rightarrow\infty}P_{\omega}\left(\max_{k\le n}|\chi(X_k,\omega)|\ge \delta\sqrt{n}\right)
&\le P\left(2K+2\e\max_{0\le t\le 1}|M_k^{(\omega)}|\ge\delta\sqrt{n}\right)\\
&\le P\left(\max_{0\le t\le 1}|B(t)|\ge\frac{\delta}{2\e}\right).
\end{align*}
The right side tends to zero as $\e\searrow 0$ for all $\delta>0$.
\end{proof}

\end{document}